\documentclass{amsart}
\usepackage{amsmath}
\usepackage{amssymb}
\usepackage{multicol}
\usepackage{amsfonts}
\usepackage{amsthm}
\usepackage{enumerate}
\usepackage{tagging}
\usepackage[margin=1.5in]{geometry}
\usepackage{mathtools}
\usepackage{graphicx}
\usepackage{mathrsfs}
\usepackage{hyperref}
\usepackage[table,xcdraw]{xcolor}
\usepackage[T1]{fontenc}
\usepackage[utf8]{inputenc}
\usepackage{float}
\usepackage{tikz-cd}

\usepackage{enumitem}

\usepackage{blkarray,booktabs}

\usepackage{dsfont}

\usepackage{siunitx}
\usepackage[e]{esvect}
\usepackage{bbm}
\usepackage{bm}
\usepackage[scaled]{beramono}
\usepackage{cleveref}
\usepackage{fancyhdr}

\hypersetup{
    colorlinks,
    linkcolor={red!50!black},
    citecolor={blue!50!black},
    urlcolor={blue!80!black}
}

\def\bal#1\nal{\begin{align*}#1\end{align*}}
\def\ball#1\nall{\begin{align}#1\end{align}}
\def\lbal#1\lnal{\begin{flalign*}#1\end{flalign*}}

\def\BAL#1\NAL{\[
\resizebox{\textwidth}{!}{$#1$}
\]}

\makeatletter
\renewcommand*\env@matrix[1][\arraystretch]{%
  \edef\arraystretch{#1}%
  \hskip -\arraycolsep
  \let\@ifnextchar\new@ifnextchar
  \array{*\c@MaxMatrixCols c}}
\makeatother

\newcommand{\rkE}{\operatorname{rk} \hspace{.02cm} E_\alpha(\Q(i))}

\newcommand{\vp}{\varphi}
\newcommand{\wvp}{\widehat{\vp}}

\newcommand{\Z}{\mathbb{Z}}
\newcommand{\Q}{\mathbb{Q}}

\newcommand{\goto}{\rightarrow}

\newcommand{\modd}{\text{ (mod }}

\theoremstyle{definition}

\newtheorem{thm}{Theorem}[section]

\newtheorem{lem}[thm]{Lemma}
\newtheorem{cor}[thm]{Corollary}

\numberwithin{equation}{section}

\newcommand{\p}{\mathfrak{p}}

\newcommand{\q}{\mathfrak{q}}

\DeclareFontFamily{U}{wncy}{}
    \DeclareFontShape{U}{wncy}{m}{n}{<->wncyr10}{}
    \DeclareSymbolFont{mcy}{U}{wncy}{m}{n}
    \DeclareMathSymbol{\Sh}{\mathord}{mcy}{"58}

\makeatletter
\def\legg@dash#1#2{\hb@xt@#1{%
  \kern-#2\p@
  \cleaders\hbox{\kern.5\p@
    \vrule\@height.2\p@\@depth.2\p@\@width\p@
    \kern.5\p@}\hfil
  \kern-#2\p@
  }}
\def\@legg#1#2#3#4#5{\mathopen{}\left[
  \sbox\z@{$\genfrac{}{}{0pt}{#1}{#3#4}{#3#5}$}%
  \dimen@=\wd\z@
  \kern-\p@\vcenter{\box0}\kern-\dimen@\vcenter{\legg@dash\dimen@{#2}}\kern-\p@
  \right]_4\mathclose{}}
\newcommand\legg[2]{\mathchoice
  {\@legg{0}{1}{}{#1}{#2}}
  {\@legg{1}{.5}{\vphantom{1}}{#1}{#2}}
  {\@legg{2}{0}{\vphantom{1}}{#1}{#2}}
  {\@legg{3}{0}{\vphantom{1}}{#1}{#2}}
}
\def\dlegg{\@legg{0}{1}{}}
\def\tlegg{\@legg{1}{0.5}{\vphantom{1}}}
\makeatother

\makeatletter
\def\leggg@dash#1#2{\hb@xt@#1{%
  \kern-#2\p@
  \cleaders\hbox{\kern.5\p@
    \vrule\@height.2\p@\@depth.2\p@\@width\p@
    \kern.5\p@}\hfil
  \kern-#2\p@
  }}
\def\@leggg#1#2#3#4#5{\mathopen{}\left[
  \sbox\z@{$\genfrac{}{}{0pt}{#1}{#3#4}{#3#5}$}%
  \dimen@=\wd\z@
  \kern-\p@\vcenter{\box0}\kern-\dimen@\vcenter{\leggg@dash\dimen@{#2}}\kern-\p@
  \right]_4^2\mathclose{}}
\newcommand\leggg[2]{\mathchoice
  {\@leggg{0}{1}{}{#1}{#2}}
  {\@leggg{1}{.5}{\vphantom{1}}{#1}{#2}}
  {\@leggg{2}{0}{\vphantom{1}}{#1}{#2}}
  {\@leggg{3}{0}{\vphantom{1}}{#1}{#2}}
}
\def\dleggg{\@leggg{0}{1}{}}
\def\tleggg{\@leggg{1}{0.5}{\vphantom{1}}}
\makeatother

\newcommand{\F}{\mathbb{F}}

\newcommand{\LEGGG}[2]{\genfrac{[}{]}{}{}{\mspace{3mu} #1 \mspace{3mu}}{\mspace{3mu} #2 \mspace{3mu}}_2}

\newcommand{\vecd}{\vec{\mathbbm{1}}^{(d)}}

\begin{document}

\title{Infinitely many elliptic curves over $\Q(i)$ with rank 2 and $j$-invariant 1728}
\author{Ben Savoie}

\begin{abstract}
We prove that there exist infinitely many elliptic curves over $\mathbb{Q}(i)$ with $j$-invariant $1728$ and rank exactly $2$ which are not obtained by base change from $\mathbb{Q}$. The rank of each such curve is determined via 2-isogeny descent, and the existence of infinitely many such curves follows from Tao's constellation theorem for Gaussian primes.
\end{abstract}

\maketitle

\markboth{\textnormal{\footnotesize Ben Savoie}}{\textnormal{\footnotesize Infinitely many elliptic curves over $\Q(i)$ with rank 2 and $j$-invariant 1728}}
\section{Introduction}For an elliptic curve $E$ over a number field $K$, the abelian group of rational points $E(K)$ is finitely generated. For any number field $K$, Merel showed that only finitely many torsion subgroups of $E(K)$ can occur \cite{merel1996bornes}, and when $[K: \Q] \leq 2$, the possible torsion groups have been classified by Mazur \cite{mazur1977modular} and Kamienny–Kenku–Momose (\cite{kenku1988torsion}, \cite{kamienny1992torsion}). However, the \textit{rank} of $E$ (i.e., the rank of $E(K)$) is far more elusive. 

Although it is known that infinitely many elliptic curves over $\Q$ have rank \textit{at least} 2, it is expected that such curves comprise a density zero subset of all elliptic curves over $\Q$. This naturally raises the question of whether there are infinitely many elliptic curves over $\Q$ with rank \textit{exactly} 2, which was recently resolved by Zywina \cite{zywina2025there}. Zywina used 2-descent to show that every elliptic curve in a certain family has rank 2, and then established the existence of infinitely many such curves by applying the \textit{polynomial Szemerédi theorem for primes} of Tao and Ziegler. Moreover, he showed that these curves all have distinct $j$-invariants; that is, no two of them are isomorphic over $\overline{\Q}$.

Zywina’s work builds on a broader framework that combines tools from additive combinatorics with descent methods. This approach was first implemented by Koymans and Pagano in their recent article \cite{koymans2024hilbert} on Hilbert's Tenth Problem, where they combined 2-descent with a Green–Tao type theorem for number fields due to Kai \cite{kai2023linear} to construct elliptic curves whose rank does not grow in certain quadratic extensions. Specifically, they proved that if $K$ is a number field with sufficiently many real embeddings, then there exists an elliptic curve $E/K$ such that $\operatorname{rk} \hspace{.02cm} E(K(i)) = \operatorname{rk} \hspace{.02cm} E(K)> 0$. 

In this paper, we restrict our attention to elliptic curves with $j$-invariant 1728. It is not difficult to show that there are infinitely many such curves over $\Q(i)$ with rank exactly 2. Indeed, if $E/\Q$ has $j(E) = 1728$, then $\mathrm{rk}\,E(\Q(i)) = 2\,\mathrm{rk}\,E(\Q)$ by \cite[10.16]{Sil}, since in this case $E$ is isomorphic to its quadratic twist by $-1$. Thus, it suffices to construct infinitely many elliptic curves over $\Q$ with $j$-invariant 1728 and rank exactly 1. This was achieved by Monsky \cite{monsky1992three}, who showed that the curve $y^2 = x^3 + px$ has rank exactly 1 whenever $p$ is a rational prime congruent to $5$ modulo $16$. 

This illustrates that the question of whether there exist infinitely many elliptic curves over a number field $K$ of some fixed high rank becomes less interesting when one permits curves arising by base change from a smaller field. This is further evidenced by \cite[Section 12]{park2019heuristic}, which  shows that the contribution from curves defined over subfields can cause the quantity 
\begin{align}\label{defn-BK}
\max\{ n \geq 0 : \text{there exist infinitely many non-isomorphic } E/K \text{ with rank } n\}
\end{align} 
to grow with $[K:\Q]$ (assuming bounded rank)\footnote{Note that the quantity defined in (\ref{defn-BK}) agrees with the definition of $B_K$ in \cite[Section 12]{park2019heuristic} if we assume that the rank of all $E/K$ is bounded.}. For this reason, we call an elliptic curve $E/K$ \textit{genuinely defined over }$K$ if it is not the base change of an elliptic curve over a proper subfield of $K$. Our main result is the following:

\begin{thm}\label{thm-infinitely-rank2}
There are infinitely many elliptic curves genuinely defined over $\Q(i)$ with $j$-invariant 1728 and rank exactly 2. 
\end{thm}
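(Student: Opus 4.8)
\emph{Overall plan.} Over $\Q(i)$, every elliptic curve with $j$-invariant $1728$ is isomorphic to a quartic twist $E_D\colon y^2 = x^3 + Dx$ for some $D \in \Q(i)^\times$, with $E_D \cong E_{D'}$ over $\Q(i)$ exactly when $D/D' \in (\Q(i)^\times)^4$, and we may take $D \in \Z[i]$ fourth-power-free. So it suffices to exhibit infinitely many $D \in \Z[i]$, pairwise inequivalent modulo $(\Q(i)^\times)^4$, with $\operatorname{rk} E_D(\Q(i)) = 2$. The plan is to take $D = -p$ for rational primes of the shape $p = a^2 + b^4$ with $p \not\equiv 1 \pmod{16}$, and to pin the rank down by the descent attached to the complex multiplication: $\operatorname{End}(E_D) = \Z[i]$, and $\vp := [1+i]$ is an endomorphism of degree $2$ with $\ker\vp = \{O,(0,0)\}$, its dual being $\wvp = [1-i]$ (also an endomorphism of $E_D$).

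\emph{The rank is $\ge 2$.} When $p = a^2 + b^4$, the point $P = (-b^2, ab)$ lies on $E_{-p}\colon y^2 = x^3 - px$, since $(-b^2)^3 - p(-b^2) = b^2(p - b^4) = (ab)^2$; as $a, b \neq 0$ it is distinct from $(0,0)$, hence non-torsion, so $E_{-p}$ already has positive rank over $\Q$. Its CM-translate $[i]P = (b^2, iab)$ also lies on $E_{-p}$, and $P, [i]P$ are $\Z$-independent (any $\Z$-linear relation would give a nonzero element of $\Z[i]$ annihilating the infinite-order point $P$). Hence $\operatorname{rk} E_{-p}(\Q(i)) \ge 2$; equivalently, this rank is even because the CM is defined over $\Q(i)$, and positive by the point $P$.

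\emph{The rank is $\le 2$.} Next I would run the $\vp$-descent. The isogeny exact sequence gives $\operatorname{rk} E_{-p}(\Q(i)) \le \dim_{\F_2}\text{S}^{(\vp)}(E_{-p}/\Q(i)) + \dim_{\F_2}\text{S}^{(\wvp)}(E_{-p}/\Q(i)) - 2$, the $-2$ being the standard $2$-torsion correction term, with equality once the relevant parts of the Shafarevich--Tate group vanish. Each Selmer group embeds into $\Q(i)^\times/(\Q(i)^\times)^2$ and is cut out by local conditions at the primes above $2p$; because $\vp$ is the CM endomorphism $[1+i]$, these conditions become quartic- and quadratic-residue-symbol identities over $\Z[i]$ relating $1+i$ and the Gaussian primes $\pi, \bar\pi$ above $p$. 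All of these identities but one are forced by congruences on $p$; the remaining one is a quartic condition --- concretely the quartic residue symbol $\llegg{\pi}{\bar\pi}$, equivalently whether $2$ (or $1+i$) is a quartic residue modulo $\pi$ --- which is \emph{not} a congruence condition on $p$. This is exactly where the shape $p = a^2 + b^4$ is used: then $\pi = a + b^2 i$, and biquadratic reciprocity together with the supplementary laws for $1+i$ and $2$ evaluates that symbol in terms of $p \bmod 16$. Imposing $p \not\equiv 1 \pmod{16}$ forces $\dim_{\F_2}\text{S}^{(\vp)}(E_{-p}/\Q(i)) + \dim_{\F_2}\text{S}^{(\wvp)}(E_{-p}/\Q(i)) \le 4$, whence $\operatorname{rk} E_{-p}(\Q(i)) \le 2$; with the previous step, $\operatorname{rk} E_{-p}(\Q(i)) = 2$ (and the descent has no Shafarevich--Tate slack).

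\emph{Infinitude, non-isomorphism, and the main difficulty.} By the theorem of Friedlander and Iwaniec there are infinitely many primes $p = a^2 + b^4$, and in the version permitting congruence restrictions on $a$ and $b$ one may keep $p$ in a fixed class modulo $16$ other than $1$ (for instance $a \equiv 3 \pmod{8}$ and $4 \mid b$, forcing $p \equiv 9 \pmod{16}$). For distinct such $p, p'$ the curves $y^2 = x^3 - px$ and $y^2 = x^3 - p'x$ are non-isomorphic over $\Q(i)$: an isomorphism would force $p/p' \in (\Q(i)^\times)^4$, which unique factorization in $\Z[i]$ forbids, since a rational prime $\equiv 1 \pmod{4}$ splits into two distinct Gaussian primes each occurring to the first power. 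This produces infinitely many pairwise non-isomorphic elliptic curves over $\Q(i)$ with $j$-invariant $1728$ and rank exactly $2$, proving Theorem~\ref{thm-infinitely-rank2}. I expect the real work to be the exact computation of the $\vp$- and $\wvp$-Selmer groups --- converting the local conditions above $2p$ into residue symbols, carrying out biquadratic reciprocity and the supplementary laws over $\Z[i]$, and proving that for this family the two Selmer dimensions sum to at most $4$ (the reverse inequality being automatic from the lower bound); a secondary technical point is extracting from Friedlander--Iwaniec a count in the needed progression modulo $16$. (An alternative route to Theorem~\ref{thm-infinitely-rank2}, presumably the one underlying the twin-prime statement of the abstract, would use the family $y^2 = x^3 - \gamma^2 x$ and replace Friedlander--Iwaniec by the Tao--Ziegler polynomial Szemerédi theorem for primes, in the spirit of Zywina's argument over $\Q$.)
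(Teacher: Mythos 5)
Your route differs from the paper's: you build Theorem~\ref{thm-infinitely-rank2} on the Friedlander--Iwaniec family $y^2 = x^3 - px$, $p = a^2+b^4$ (the paper's Theorem~\ref{thm-rank2-friedlander-iwaniec}), whereas the paper deliberately uses the congruent-number family $y^2 = x^3-(\beta^4+4k^4)x$ of Theorem~\ref{thm-rank2-congruent-number-curve} together with Tao's constellation theorem in the Gaussian primes (combined with Chebotarev to see that the primes $\equiv -1-6i \pmod{16}$ have positive relative density). Your rank computation for a single curve is essentially the paper's: the explicit point (yours is $(-b^2,ab)$, the paper's is $(a+b^2i,\,b(1+i)(a+b^2i))$; both work), evenness of the rank from the CM, and a $\vp$-descent whose local conditions reduce to residue symbols over $\Z[i]$. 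One inaccuracy in your sketch: the shape $p=a^2+b^4$ is used only to produce the rational point; the Selmer bound uses only the congruence class of $p$ modulo $16$, and the decisive symbol $\LEGGG{\p_1}{\p_2}=\lleg{2}{p}$ \emph{is} a congruence condition (mod $8$), not a quartic-residue condition escaping congruences.

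The genuine gap is in your infinitude step. You need infinitely many primes $p=a^2+b^4$ with $p\equiv 5$ or $9 \pmod{16}$, equivalently a Friedlander--Iwaniec theorem with $a,b$ restricted to congruence classes (e.g.\ $a\equiv 2\pmod 4$, $b$ odd, or $a$ odd, $b\equiv 2 \pmod 4$). You assert that such a version exists, but you neither prove it nor cite it, and the original Friedlander--Iwaniec theorem as stated does not provide it: a priori all but finitely many primes of the form $a^2+b^4$ could be $\equiv 1\pmod{16}$. This is precisely the obstruction the paper sidesteps --- note that the paper explicitly states that Theorems~\ref{thm-rank2-twin-primes} and~\ref{thm-rank2-friedlander-iwaniec} are \emph{not} used to prove Theorem~\ref{thm-infinitely-rank2}, and its abstract carefully claims only that there are infinitely many primes of the form $a^2+b^4$, not infinitely many with $p\not\equiv 1\pmod{16}$. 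Unless you supply a congruence-refined Friedlander--Iwaniec count (which would require rerunning their sieve argument or locating a precise reference), your proof of Theorem~\ref{thm-infinitely-rank2} is incomplete; the fallback you mention in your closing parenthesis --- the congruent-number family with a Szemer\'edi/constellation-type input --- is in fact the paper's actual argument. Your non-isomorphism observation (distinct $p,p'$ give $p/p'\notin(\Q(i)^\times)^4$, hence non-isomorphic quartic twists) is correct and is a point the paper leaves implicit for its own family.
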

Elliptic curves over \(\mathbb{Q}(i)\) with \(j\)-invariant 1728 arise as \emph{quartic twists} of the curve \(y^2 = x^3 + x\), and can be expressed in the form \cite[X.5.4]{Sil}
\begin{align}\label{quartic-twist-form}
E_\alpha: \; y^2 = x^3 + \alpha x \quad \text{for some } \alpha \in \mathbb{Q}(i)^\times / (\mathbb{Q}(i)^\times)^4.
\end{align}
We call $E_\alpha$ a \emph{congruent number curve} if \(\alpha\) is a square, in which case there is a unique square-free $\gamma \in \mathbb{Z}[i]$ with $E_\alpha \cong E_{\gamma^2}$. We in fact prove that there are infinitely many congruent number curves genuinely defined over \(\mathbb{Q}(i)\) with rank 2.

\begin{thm}\label{thm-rank2-congruent-number-curve}
Let $\beta \in \Z[i]$ and $k \in \Z$ be such that each of the four Gaussian integers
\[
\beta+ k(1+i), \quad \beta + k i(1+i), \quad \beta -k(1+i), \quad \beta - k i (1+i)
\]
is a Gaussian prime congruent to $-1-6i$ modulo $16.$ If $E$ is the elliptic curve given by
\[
y^2 = x^3 -(\beta^4 +4k^4)^2x,
\]
then $E$ is genuinely defined over $\Q(i)$ and $E(\Q(i)) \cong \Z^2 \oplus (\Z/2\Z)^2$.
\end{thm}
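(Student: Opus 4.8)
The plan is to descend on $E$ by the degree‑$2$ isogeny supplied by its complex multiplication, to compute both Selmer groups exactly using the congruence hypothesis, and to match the resulting upper bound on the rank with an explicit pair of independent points. First, set $D=\beta^4+4k^4$, so $E\colon y^2=x^3-Dx$. Since $(1+i)^4=-4$ we have $D=\beta^4-\bigl(k(1+i)\bigr)^4$, and the factorization $X^4-Y^4=(X-Y)(X+Y)(X-iY)(X+iY)$ with $X=\beta$, $Y=k(1+i)$ gives $D=\pi_1\pi_2\pi_3\pi_4$, where $\pi_1=\beta+k(1+i)$, $\pi_2=\beta+ki(1+i)$, $\pi_3=\beta-k(1+i)$, $\pi_4=\beta-ki(1+i)$ are exactly the four Gaussian primes of the hypothesis, each $\equiv-1-6i\pmod{16}$. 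The map $[i]\colon(x,y)\mapsto(-x,iy)$ is an automorphism of $E$, so $E$ has CM by $\mathbb{Z}[i]$ and $E(\mathbb{Q}(i))$ is a $\mathbb{Z}[i]$‑module; the endomorphism $\varphi:=[1+i]$ is a degree‑$2$ isogeny with $\ker\varphi=\{O,(0,0)\}$ and dual $\widehat{\varphi}=[1-i]$. (Equivalently, the usual $2$‑isogeny $E\to E_{4D}$ is an endomorphism, because $-4=(1+i)^4$ is a fourth power, so $E_{4D}\cong E$.)

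Next I would carry out the $\varphi$‑ and $\widehat{\varphi}$‑descents. The Kummer maps embed $E(\mathbb{Q}(i))/\widehat{\varphi}E(\mathbb{Q}(i))$ and $E(\mathbb{Q}(i))/\varphi E(\mathbb{Q}(i))$ into $\mathbb{Q}(i)^\times/(\mathbb{Q}(i)^\times)^2$, with images inside the finite subgroup $V$ generated by $i$, $1+i$, $\pi_1,\dots,\pi_4$ (there is no archimedean place, $\mathbb{Q}(i)$ being imaginary). The Selmer groups $\mathrm{S}^{(\varphi)}(E/\mathbb{Q}(i))$ and $\mathrm{S}^{(\widehat{\varphi})}(E/\mathbb{Q}(i))$ consist of those $d\in V$ for which the homogeneous space $C_d\colon w^2=d\,u^4-(D/d)\,v^4$ (suitably normalized for the other isogeny) has a $\mathbb{Q}(i)_v$‑point at every place $v$. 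For $v$ of good reduction this holds automatically, so only $v=\pi_1,\dots,\pi_4$ and $v=(1+i)$ require work. At $v=\pi_j$ an analysis via Hensel's lemma reduces solvability of $C_d$ to the (non)vanishing of residue symbols assembled from $\lleg{\pi_a}{\pi_b}$, $\llegg{i}{\pi_j}$, and $\llegg{1+i}{\pi_j}$; quartic reciprocity in $\mathbb{Z}[i]$ and its supplementary laws express each of these through the residues of the $\pi_j$ modulo $16$, so the single hypothesis $\pi_j\equiv-1-6i\pmod{16}$ pins all of them down, uniformly in $(\beta,k)$. The place $v=(1+i)$, at which $E$ has additive reduction, is the delicate one: the local condition there is a congruence on $d$ modulo a bounded power of $1+i$, handled directly from $\pi_j\equiv-1-6i\pmod{16}$ (one notes, e.g., that $D\equiv 9-8i$ is a square in $\mathbb{Z}[i]_{(1+i)}$).

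Assembling these local conditions into an $\mathbb{F}_2$‑linear system on the exponent vectors of $d$ over $\{i,1+i,\pi_1,\dots,\pi_4\}$, I expect $\dim_{\mathbb{F}_2}\mathrm{S}^{(\varphi)}(E/\mathbb{Q}(i))+\dim_{\mathbb{F}_2}\mathrm{S}^{(\widehat{\varphi})}(E/\mathbb{Q}(i))=4$. With the isogeny rank formula
\[
\operatorname{rk}E(\mathbb{Q}(i))=\dim_{\mathbb{F}_2}\frac{E(\mathbb{Q}(i))}{\varphi E(\mathbb{Q}(i))}+\dim_{\mathbb{F}_2}\frac{E(\mathbb{Q}(i))}{\widehat{\varphi}E(\mathbb{Q}(i))}-2
\]
and the Selmer bounds, this forces $\operatorname{rk}E(\mathbb{Q}(i))\le 2$. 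For the reverse inequality, the point $P=(\beta^2,\,2i\beta k^2)$ lies on $E(\mathbb{Q}(i))$ — indeed $x^3-Dx=\beta^6-\beta^6-4k^4\beta^2=-4\beta^2k^4=(2i\beta k^2)^2$ — and $P$ is non‑torsion (it has $y\neq 0$, and a reduction argument at a prime of good reduction rules out order $4,8,\dots$); since $\mathbb{Z}[i]\cdot P\cong\mathbb{Z}[i]$, the points $P$ and $[i]P=(-\beta^2,-2\beta k^2)$ are $\mathbb{Z}$‑independent, so $\operatorname{rk}E(\mathbb{Q}(i))\ge 2$. Hence $\operatorname{rk}E(\mathbb{Q}(i))=2$, equality holds throughout the descent, $\Sh(E/\mathbb{Q}(i))[\varphi]=\Sh(E/\mathbb{Q}(i))[\widehat{\varphi}]=0$, and a final inspection of the $2$‑power torsion from the same descent data identifies $E(\mathbb{Q}(i))$ as claimed.

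The crux is the local analysis just outlined: arranging every local solvability condition — above all the $(1+i)$‑adic one (the prime above $2$ is always the subtle point when $j=1728$) and the transformation of the inter‑prime symbols $\lleg{\pi_a}{\pi_b}$ via quartic reciprocity — so that the modulus $16$ and the residue $-1-6i$ are exactly strong enough to force the two Selmer dimensions to sum to $4$ with no slack, yet weak enough that infinitely many $(\beta,k)$ satisfy the hypothesis. Calibrating this is where the real work lies.
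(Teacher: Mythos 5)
Your overall strategy --- descend via the degree-$2$ CM isogeny, bound the rank by the two Selmer groups, and exhibit an explicit non-torsion point whose $\Z[i]$-span has $\Z$-rank $2$ --- is the same as the paper's. But there are two genuine problems.

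First, the group you would compute is not the one claimed. You factor $D=\beta^4+4k^4=\pi_1\pi_2\pi_3\pi_4$ as a \emph{squarefree} product of the four hypothesis primes, which is correct for the equation as printed; but then $-D$ is not a square in $\Z[i]$, so by Theorem \ref{thm-torsion} the torsion of $y^2=x^3-Dx$ is only $\Z/2\Z$, and the asserted structure $\Z^2\oplus(\Z/2\Z)^2$ is unobtainable from your setup. The paper's own proof in fact runs the descent for $\alpha=-(\beta^4+4k^4)^2=-\p_1^2\p_2^2\p_3^2\p_4^2$ --- this is what makes the curve a congruent number curve $y^2=x^3-\gamma^2x$, what its exhibited point $(4\beta^2k^2,\,2i\beta k(\beta^4-4k^4))$ lies on, and what yields full $2$-torsion; the printed equation is evidently missing a square. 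Your point $(\beta^2,2i\beta k^2)$ does lie on the squarefree curve and gives rank at least $2$ there (provided you also rule out $\beta k=0$, as the paper does), but your write-up cannot deliver the stated conclusion, and the local conditions at $(1+i)$ are not the same for the two normalizations of $\alpha$.

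Second, and more importantly, the heart of the proof is missing: you ``expect'' the two Selmer dimensions to sum to $4$ and explicitly defer the calibration, but that calibration \emph{is} the theorem. Moreover, the mechanism you propose is not right as stated: the congruence $\pi_j\equiv-1-6i\pmod{16}$ does \emph{not} pin down the six symbols $\LEGGG{\pi_a}{\pi_b}$. What it pins down (via Lemma \ref{lem-m-n}) is $\LEGGG{i}{\pi_j}=-1$ and $\LEGGG{1+i}{\pi_j}=1$; combining this with the constellation structure --- every difference $\pi_a-\pi_b$ is a unit times a power of $(1+i)$ times $k$ --- and quartic reciprocity yields only \emph{relations} among the six symbols, leaving the single sign $\LEGGG{k}{\pi_j}$ (shown to be independent of $j$) undetermined. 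One must then check that for \emph{both} values of that sign the resulting $\F_2$-linear system (the matrix $L$ of Theorem \ref{thm-selmer-algorithm}) cuts the Selmer group down to dimension $2$, and separately verify the conditions at $(1+i)$ for the surviving classes $d$. This is exactly the content of the paper's Lemma \ref{lem-constellation-residues} and the ensuing two-case analysis; without it the bound $\operatorname{rk}E(\Q(i))\le 2$ is unproven.
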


To prove Theorem \ref{thm-infinitely-rank2}, we apply Tao’s \textit{constellation theorem in the Gaussian primes} \cite[Theorem 1.2]{tao2006gaussian} to establish infinitely many pairs $(\beta,k)$ satisfying the hypotheses of Theorem \ref{thm-rank2-congruent-number-curve}. This approach is analogous to Zywina’s use of the polynomial Szemerédi theorem and to Koymans-Pagano's use of Kai's Green-Tao type theorem to achieve infinitude.

Since an elliptic curve $E_\alpha / \Q(i)$ of the form (\ref{quartic-twist-form}) has complex multiplication by $\Z[i]$, the group $E_\alpha (\Q(i))$ is a $\Z[i]$-module. In particular, the rank of $E_\alpha$ is necessarily even. Thus, to prove $E_\alpha$ has rank 2, it suffices to produce a single non-torsion $\Q(i)$-point and show that the rank is at most 2. To this end, we first compute all of the $\Q(i)$-torsion points. We then use descent via a 2-isogeny to bound the rank from above.
\subsection*{Acknowledgements} I am deeply grateful to Anthony Várilly-Alvarado for suggesting this problem and for his generous encouragement throughout this project. I also thank Niven Achenjang for an exciting conversation that inspired me to take the final steps toward completing this work. Finally, I thank Anthony Kling for valuable feedback.

\section{\texorpdfstring{\( \Q(i) \)-torsion points}{Q(i)-torsion points}} 
 Our starting point is the following theorem of Najman, which extends Mazur's theorem to elliptic curves over $\Q(i)$.

\begin{thm}[Theorem 1 (ii), \cite{najman2010torsion}]\label{thm-najman} If $E$ is an elliptic curve over $\Q(i)$, then $E(\Q(i))_{\text{tors}}$ is isomorphic to one of the following 17 groups 
\bal 
&\Z/m\Z \quad \hspace{1.6cm} \text{ with } m \in \{1,\dots, 13\}\backslash \{11\},\\
&\Z/2\Z \oplus \Z/2n\Z \quad \hspace{.27cm} \text{ with } n \in \{1,2,3,4\},\\
&\Z/4\Z \oplus \Z/4\Z.
\nal 
\end{thm}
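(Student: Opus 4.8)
The plan is to translate the problem into one about rational points on modular curves and then carry out the point count over $\Q(i)$. A point of exact order $N$ in $E(\Q(i))$ corresponds to a non-cuspidal $\Q(i)$-point of the modular curve $X_1(N)$, and a subgroup isomorphic to $\Z/M\Z\oplus\Z/N\Z$ with $M\mid N$ corresponds to a non-cuspidal $\Q(i)$-point of $X_1(M,N)$. Two inputs reduce this to a finite verification. First, by Merel's uniform boundedness theorem \cite{merel1996bornes} together with the Kamienny--Kenku--Momose classification of torsion over quadratic fields \cite{kenku1988torsion, kamienny1992torsion}, the group $E(\Q(i))_{\mathrm{tors}}$ already lies in an explicit list of $26$ groups, so it suffices to decide, for each, whether the associated modular curve has the required $\Q(i)$-point. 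Second, the Weil pairing forces $\mu_M\subseteq\Q(i)$ whenever $E[M]\subseteq E(\Q(i))$; since $\Q(i)$ contains only the fourth roots of unity, this immediately eliminates $\Z/3\Z\oplus\Z/3\Z$ and $\Z/3\Z\oplus\Z/6\Z$ and forces the ``square part'' of any torsion subgroup to have exponent dividing $4$.

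Next I would realize the $17$ groups in the statement. Fifteen of them already occur over $\Q$ by Mazur's theorem \cite{mazur1977modular}, hence over $\Q(i)$, so only $\Z/13\Z$ and $\Z/4\Z\oplus\Z/4\Z$ need a genuinely new construction. For $\Z/4\Z\oplus\Z/4\Z$ one uses that $X(4)=X_1(4,4)$ has genus $0$ and acquires $\Q(i)$-rational (cuspidal, hence all) points precisely because $\mu_4\subseteq\Q(i)$, giving infinitely many elliptic curves over $\Q(i)$ with $E[4]\subseteq E(\Q(i))$; for $\Z/13\Z$, where $X_1(13)$ has genus $2$ and no non-cuspidal $\Q$-points, I would exhibit an explicit elliptic curve over $\Q(i)$ carrying a rational $13$-torsion point — equivalently, a non-cuspidal $\Q(i)$-point on $X_1(13)$ — found by a direct search and then verified.

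The substantive part is showing that the remaining candidates do not occur over $\Q(i)$: after the Weil-pairing step, these are $\Z/11\Z$, $\Z/14\Z$, $\Z/15\Z$, $\Z/16\Z$, $\Z/18\Z$, $\Z/2\Z\oplus\Z/10\Z$, and $\Z/2\Z\oplus\Z/12\Z$. For the genus-one cases ($X_1(11)$, $X_1(14)$, $X_1(15)$, and the relevant $X_1(2,2n)$) I would identify each modular curve with an explicit elliptic curve $C$ and prove $\mathrm{rk}\,C(\Q(i))=0$ via the decomposition $\mathrm{rk}\,C(\Q(i))=\mathrm{rk}\,C(\Q)+\mathrm{rk}\,C^{(-1)}(\Q)$, where $C^{(-1)}$ is the quadratic twist attached to $\Q(i)/\Q$, reducing everything to two rank-zero computations over $\Q$; a torsion computation over $\Q(i)$ then shows every $\Q(i)$-point is a cusp, so no new curve appears. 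For the genus-two cases $X_1(16)$ and $X_1(18)$ I would pass to a quotient of $J_1(N)$ that is rank zero over $\Q(i)$ (an Eisenstein or winding quotient, or an elliptic quotient when one exists, whose rank is controlled as above) and run a Chabauty--Coleman argument at a Gaussian prime of good reduction to bound $X_1(N)(\Q(i))$, matching the points found with the cusps; reductions modulo several Gaussian primes away from the level supply the a priori point-count bounds that make Chabauty conclusive.

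The main obstacle is precisely these rank-zero verifications after base change to $\Q(i)$: a priori the twist $C^{(-1)}$, or the relevant quotient of $J_1(N)$, could acquire positive rank over $\Q$, which would break the argument, so each case demands either a successful descent controlling the appropriate $2$- or $\ell$-Selmer group (and hence $\Sh$) or an analytic rank-zero input, and for the higher-genus $X_1(N)$ one must first confirm that a suitable quotient of $J_1(N)$ stays rank zero over $\Q(i)$ before Chabauty--Coleman can even be applied.
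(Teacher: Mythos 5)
There is nothing in the paper to compare against: Theorem~\ref{thm-najman} is imported as a black box from Najman's paper, with no proof given. Measured against Najman's published argument, your sketch has the right architecture --- reduce to the Kamienny--Kenku--Momose list of $26$ groups, discard $\Z/3\Z\oplus\Z/3\Z$ and $\Z/3\Z\oplus\Z/6\Z$ via the Weil pairing, realize $\Z/4\Z\oplus\Z/4\Z$ using $X(4)$ and $\mu_4\subset\Q(i)$, and decide each remaining modular curve over $\Q(i)$ using $\operatorname{rk}C(\Q(i))=\operatorname{rk}C(\Q)+\operatorname{rk}C^{(-1)}(\Q)$ in genus one and a rank-zero Jacobian (or quotient) computation in genus two. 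Your list of groups to eliminate is exactly the complement of the quoted list inside the $26$ after the Weil-pairing step, and you correctly observe that, since the statement only asserts membership in a list, the elimination half is all that is logically required.

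The one genuine defect is your treatment of $\Z/13\Z$. You propose to realize it by searching for a non-cuspidal $\Q(i)$-point on $X_1(13)$; that search cannot succeed. Najman's theorem in its sharp form lists only $16$ groups for $\Q(i)$ --- Mazur's fifteen together with $\Z/4\Z\oplus\Z/4\Z$ --- and no elliptic curve over $\Q(i)$ has a point of order $13$: the genus-$2$ curve $X_1(13)$ is $y^2=f(x)$ with $f$ a sextic that is positive on all of $\R$, so the quadratic points coming from the hyperelliptic pencil all lie in real quadratic fields (13-torsion first occurs over fields such as $\Q(\sqrt{17})$), while the finitely many exceptional quadratic points, controlled by $J_1(13)(\Q)\cong\Z/19\Z$ which remains rank zero over $\Q(i)$, are cusps. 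The statement as printed here is therefore true but not sharp (being one of $16$ groups is in particular being one of these $17$), and since no realization is needed to prove it, the fix is simply to delete the claimed construction for $\Z/13\Z$ --- or to move $X_1(13)$ to the elimination side alongside $X_1(16)$ and $X_1(18)$, which is where Najman actually treats it. The remaining risk you flag, that a $-1$-twist or a quotient of $J_1(N)$ could acquire positive rank over $\Q(i)$, is real but is settled in Najman's proof by explicit descent, and none of the required ranks fails to be zero.
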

We first narrow down the possibilities by determining the 2-torsion $\Q(i)$-points.
\begin{lem}\label{lem-2-torsion} Let $E_{\gamma^2}/\mathbb{Q}(i)$ be the elliptic curve $y^2 = x^3 + \gamma^2 x$ for some $\gamma \in \mathbb{Z}[i]$. Then 
\[
E_{\gamma^2}(\Q(i))[2] = \{ O, \ (0,0), \ (\gamma i,0), \ (-\gamma i, 0) \} \cong \Z/2\Z \oplus \Z/2\Z.
\]
\end{lem}
\begin{proof} For all $(x,y) \in E_{\gamma^2}(\Q(i))\backslash \{O\}$, the duplication formula \cite[III.2.3]{Sil} yields
$$
2(x,y) = O \iff x(x+ i \gamma)(x-i \gamma) = 0 \text{ and }y=0.
$$
\end{proof}
Combining Theorem \ref{thm-najman} with Lemma \ref{lem-2-torsion} shows that 
\begin{align}\label{b-square-torsion-possibilities}
E_{\gamma^2}(\Q(i))_{\text{tors}}\cong (\Z/4\Z)^2 \text{ or }\Z/2\Z \oplus \Z/2n\Z \text{ for some }n \in \{1,2,3,4\}.
\end{align}
To determine which of these torsion subgroups occur, we will analyze when points of order \(3\) or \(4\) exist. This is done using the \emph{division polynomials} \(\psi_3\) and \(\psi_4\): any point \((x_0, y_0)\) of order \(i \in \{3, 4\}\) must satisfy \(\psi_i(x_0) = 0\). For the general definition of the division polynomials \(\psi_i\), see \cite[Section 3.2]{washington2008elliptic}; we will state only the specialized forms relevant to our setting.

\begin{thm}\label{thm-torsion} Let $\gamma \in \Z[i]$ be square-free. Then 
\bal 
E_{\gamma^2}(\Q(i))_{\text{tors}} \cong \begin{cases}
    \Z/2\Z \oplus \Z/4\Z &\text{if }\gamma = \pm i,\\
    \Z/2\Z \oplus \Z/2\Z &\text{if }\gamma \neq \pm i.
\end{cases}
\nal 
\end{thm}
\begin{proof} By \cite[Theorem 3.6]{washington2008elliptic}, any point $(x_0, y_0) \in E_{\gamma^2}(\Q(i))$ of order 3 must satisfy 
\bal 
\psi_3(x_0) = 3 (x_0/\gamma)^4 + 6 (x_0/\gamma)^2 - 1 = 0.
\nal 
However, this would imply $(x_0/\gamma)^2 = -1 \pm 2 /\sqrt{3}
$, which is impossible since $\sqrt{3} \not\in \Q(i)$. Thus, there are no points of order 3 in $E_{\gamma^2}(\Q(i))$ for any $\gamma$.

Similarly, any point $(x_0, y_0) \in E_{\gamma^2}(\Q(i))$ of order 4 must have $y_0 \neq 0$ and satisfy 
\bal 
\psi_4(x_0)/y_0= 4 (x_0^2 - \gamma^2)(x_0^4 + 6 \gamma^2 x_0^2 + \gamma^4) = 0.
\nal 
Note that $x^4 + 6 \gamma^2 x^2 + \gamma^4 \neq 0$, since otherwise $(x_0 / \gamma)^2 = -3 \pm 2 \sqrt{2} \in \Q(i)$. Therefore, a point of order 4 must satisfy $\gamma^2 = x_0^2$. In this case, we have 
\bal 
y_0^2 = x_0^3 + \gamma^2 x_0 = i^3 (1+i)^2 x_0^3,
\nal 
which is only possible if $x_0 = i \delta^2$ for some $\delta\in \Z[i]$. But then $\gamma^2 = - \delta^4 = -1$, since $\gamma$ is square-free. In summary, $E_{\gamma^2}$ contains a $\Q(i)$-point of order 4 if and only if $\gamma^2= -1$, in which case \cite[\href{https://www.lmfdb.org/EllipticCurve/2.0.4.1/64.1/CMa/1}{Elliptic curve 64.1-CMa1}]{lmfdb}
$$E_{-1}(\Q(i)) \cong \Z/2\Z \oplus \Z/4\Z.$$ 

The desired result now follows from (\ref{b-square-torsion-possibilities}).
\end{proof} 
\begin{cor}\label{cor-torsion-y-0}Let \(\gamma \in \Z[i] \setminus \{0, \pm i\}\) be square-free. Then every torsion point $(x,y) \in E_{\gamma^2}(\Q(i))_{\text{tors}} \backslash \{ O \}$ satisfies $y=0$.
\end{cor}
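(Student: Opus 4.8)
The plan is to obtain this as an immediate consequence of Theorem \ref{thm-torsion} together with Lemma \ref{lem-2-torsion}. Since $\alpha$ is fourth-power-free and avoids the three exceptional values $-1, -1+2i, -1-2i$, Theorem \ref{thm-torsion} leaves only two possibilities: $E_\alpha(\Q(i))_{\text{tors}} \cong \Z/2\Z \oplus \Z/2\Z$ when $\alpha = \gamma^2$ for some $\gamma \in \Z[i]$, and $E_\alpha(\Q(i))_{\text{tors}} \cong \Z/2\Z$ otherwise. In either case the full torsion subgroup is annihilated by $2$, so it suffices to show that every affine point $P = (x,y)$ of order dividing $2$ has $y = 0$.

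This last reduction is already done: it is exactly the duplication-formula computation carried out in the proof of Lemma \ref{lem-2-torsion}, where for $(x,y) \neq \mathcal{O}$ one has $2(x,y) = \mathcal{O}$ if and only if $y = 0$ and $x(x^2+\alpha) = 0$. Equivalently, one can simply invoke the explicit description of $E_\alpha(\Q(i))[2]$ in Lemma \ref{lem-2-torsion}, namely $\{\mathcal{O}, (0,0)\}$ or $\{\mathcal{O}, (0,0), (\gamma i, 0), (-\gamma i, 0)\}$, each nonzero member of which has $y$-coordinate $0$; the point at infinity $\mathcal{O}$ has no affine $y$-coordinate and is excluded by the notation $(x,y)$.

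There is essentially no obstacle here: the one thing to verify is that the two surviving cases of Theorem \ref{thm-torsion} genuinely exhaust all $\alpha$ allowed by the hypothesis of the corollary, which is transparent from the statement of the theorem, since $\alpha = -1$ and $\alpha = -1 \pm 2i$ are precisely the cases removed. Combining these observations yields that every torsion point of $E_\alpha(\Q(i))$ satisfies $y = 0$, as claimed.
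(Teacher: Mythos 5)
Your proof is correct and follows exactly the paper's own route: Theorem \ref{thm-torsion} shows that for the allowed $\alpha$ the torsion subgroup equals the $2$-torsion, and Lemma \ref{lem-2-torsion} gives the explicit list of $2$-torsion points, all with $y=0$. No issues.
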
 
\begin{proof} In this case, $E_{\gamma^2}(\Q(i))_{\text{tors}} = E_{\gamma^2}(\Q(i))[2]$, so the result follows from Lemma \ref{lem-2-torsion}.
\end{proof}

\section{Computing the rank via 2-descent} 
\subsection{\texorpdfstring{Bounding the rank with the $\varphi$-Selmer group}{Bounding the rank with the phi-Selmer group}} Let $K$ be a number field and let $E_\alpha$ be the elliptic curve over $K$ given by $y^2 = x^3 + \alpha x$ for $\alpha \in K$. Then $E_\alpha$ is equipped with a degree 2-isogeny $\varphi: E_\alpha \rightarrow E_{-4 \alpha}$ given by 
\bal 
(x,y)\longmapsto (y^2 / x^2, y(\alpha - x^2) / x^2)
\nal 
with kernel $E_\alpha[\vp] = \{O, (0,0)\}.$ The dual isogeny $E_{-4\alpha}\rightarrow E_{\alpha}$ is given by 
\bal 
(x,y)\longmapsto (y^2 / (4x^2), -y(4\alpha +x^2) / (8x^2))
\nal 
with kernel $E_{-4\alpha}[\wvp] = \{O, (0,0)\}$. Associated to $\vp$, we have the \textit{$\vp$-Selmer group} $\text{S}^{(\varphi)}(E_\alpha / K)$, which fits into the short exact sequence \cite[X.4.2]{Sil}
\begin{align}\label{selmer-ses}
0 \longrightarrow E_{-4\alpha}(K)/\varphi\big(E_\alpha(K)\big) \longrightarrow \text{S}^{(\varphi)}(E_\alpha/K) \longrightarrow \Sh(E_\alpha/K)[\varphi]\longrightarrow 0.
\end{align}
It follows from (\ref{selmer-ses}) (see the proof of \cite[X.6.2b]{Sil}) that the rank of $E_\alpha$ is bounded by the dimensions of the $\varphi$-Selmer group of $E_\alpha$ and the $\wvp$-Selmer group of $E_{-4\alpha}$:
\begin{align}\label{rank-bound}
\rkE \leq {} & \dim_{\mathbb{F}_2} S^{(\varphi)}(E_\alpha/K) 
+ \dim_{\mathbb{F}_2} S^{(\widehat{\varphi})}(E_{-4\alpha}/K) \notag \\
& - \dim_{\mathbb{F}_2} \frac{E_{-4\alpha}(K)[\widehat{\varphi}]}{\varphi(E_\alpha(K)[2])}
- \dim_{\mathbb{F}_2} E_\alpha(K)[2].
\end{align}

If we further assume that $i \in K$ and $\alpha$ is a square, then this bound simplifies as follows.
\begin{lem}\label{lem-rank-bound} 
Let $K/ \Q(i)$ and let $\gamma\in K^\times$ be square-free. Then 
\[
\operatorname{rk} \hspace{.02cm} E_{\gamma^2}(K) 
\leq 2 \dim_{\mathbb{F}_2} \text{S}^{(\varphi)}(E_{\gamma^2}/K) - 2.
\]
\end{lem}
\begin{proof} Since $\widehat{\varphi}\circ \varphi = [2]$, we have 
$$
\varphi(E_{\gamma^2}(K) [2] ) \subset E_{-4\gamma^2}(K) [\widehat{\varphi}] = \{O, (0,0) \}.
$$
Furthermore, we have $(\gamma i, 0) \in E_{\gamma^2}(K)[2]$ and $\varphi(\gamma i , 0) = (0,0),$ so in fact
\bal 
\varphi(E_{\gamma^2}(K)[2]) = E_{-4 \gamma^2}(K)[\widehat{\varphi}].
\nal 
Since $K/ \Q(i)$, we also have $E_{\gamma^2}(K)[2] \cong (\Z/2\Z)^2$ by Lemma \ref{lem-2-torsion}. 
Moreover, note that $S^{(\varphi)}(E_{\gamma^2}/K)$ is isomorphic to $S^{(\wvp)}(E_{-4\gamma^2}/K)$, since the isomorphism
$$E_{-4 \alpha} \overset{\sim}{\longrightarrow} E_\alpha, \quad (x,y) \longmapsto \big(x / (1+i)^2, y/ (1+i)^3 \big)$$
maps $E_{-4\alpha}[\wvp]$ isomorphically onto $E_\alpha[\vp]$.
\end{proof}
\subsection{\texorpdfstring{Computing the $\varphi$-Selmer group}{Computing the phi-Selmer group}} We will compute the $\varphi$-Selmer group using the algorithm introduced in \cite{kling2024computing}. For the elliptic curves considered in Theorem \ref{thm-rank2-congruent-number-curve}, the algorithm simplifies considerably; for convenience, we state the relevant version here. We begin by establishing notation and recalling some basic facts about Gaussian integers.

Every Gaussian integer $\alpha$ admits a unique factorization  
\[
\alpha = i^s (1+i)^t \prod \mathfrak{p}_j^{e_j},
\]  
where $s, t, e_j \geq 0$ and each $\mathfrak{p}_j$ is a \emph{primary} Gaussian prime, i.e., $\mathfrak{p}_j \equiv 1 \bmod (1+i)^3$. For any $\alpha \in \Z[i]$ and Gaussian prime $\mathfrak{p} \nmid \alpha$, the \textit{Gaussian quadratic residue symbol} is defined by  
\[
\LEGGG{\alpha}{\p} \equiv \alpha^{(\text{Nm}(\p) -1) /2} \modd \p) = 
\begin{cases}
    1 & \text{if } \alpha \equiv \gamma^2 \modd \mathfrak{p}) \text{ for some } \gamma \in \Z[i], \\
    -1 & \text{otherwise.}
\end{cases}
\]  
This symbol is multiplicative: if $\p \nmid \alpha, \beta$, then $\LEGGG{\alpha \beta}{\p} = \LEGGG{\alpha}{\p} \LEGGG{\beta}{\p}$. By quartic reciprocity (see \cite[Prop. 6.8]{lemmermeyer2013reciprocity}), we have $\LEGGG{\p}{\q} = \LEGGG{\q}{\p}$ for distinct primary Gaussian primes $\mathfrak{p}, \mathfrak{q}$. 

Our computation of the $\varphi$-Selmer group relies on the following results from \cite{kling2024computing}.

\begin{lem}\cite[Remark 2.2 and Lemma 2.4]{kling2024computing}\label{lem-m-n}
Let \(\alpha \in \Z[i]\) be primary; that is, \(\alpha \equiv 1 \mod (1+i)^3\). Then there exist unique elements \(m_\alpha, n_\alpha \in \Z/4\Z\) such that  
\bal
\alpha &\equiv (1 - 4i)^{m_\alpha} (-1 - 6i)^{n_\alpha} \mod (1+i)^7\\
&\equiv (3+2i)^{\overline{n_\alpha}} \mod 4,
\nal
 where \(\overline{n_\alpha} \in \Z/2\Z\) denotes the reduction of \(n_\alpha\).
Moreover, for all primary \(\alpha, \beta \in \Z[i]\), 
\[
m_{\alpha\beta} = m_\alpha + m_\beta \quad \text{and} \quad n_{\alpha\beta} = n_\alpha + n_\beta.
\]
Finally, if $\p$ is a primary Gaussian prime, then 
\[
\LEGGG{1+i}{\p} = (-1)^{m_\p} \quad \text{and} \quad \LEGGG{i}{\p} = (-1)^{n_\p}.
\]
\end{lem}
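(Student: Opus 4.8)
The strategy is to determine the structure of the finite abelian group $G$ of primary residues modulo $(1+i)^7$ — that is, the subgroup of $(\Z[i]/(1+i)^7)^\times$ consisting of classes $\equiv 1 \pmod{(1+i)^3}$ — and to exhibit $1-4i$ and $-1-6i$ as a basis. I would work in the completion $\mathbb{Q}_2(i)$, with unit filtration $U^{(k)} = 1 + (1+i)^k\mathbb{Z}_2[i]$; its absolute ramification index over $\mathbb{Q}_2$ is $e = 2$. Since $k \geq 3 > e/(p-1) = 2$, the $(1+i)$-adic logarithm induces isomorphisms $U^{(k)}/U^{(k')} \cong (1+i)^k\mathbb{Z}_2[i]/(1+i)^{k'}\mathbb{Z}_2[i]$ for $3 \le k \le k'$. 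In particular $G = U^{(3)}/U^{(7)} \cong \Z[i]/(1+i)^4\Z[i] = \Z[i]/4\Z[i] \cong (\Z/4\Z)^2$ as abelian groups, so $|G| = 16$.

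Next I would locate $1-4i$ and $-1-6i$ inside $G$. A valuation computation gives $v_{(1+i)}(1-4i-1) = v_{(1+i)}(4) = 4$ and $v_{(1+i)}(-1-6i-1) = v_{(1+i)}(-2-6i) = 3$; in particular both are primary, with $1-4i \in U^{(4)}\setminus U^{(5)}$ and $-1-6i \in U^{(3)}\setminus U^{(4)}$. Because $v_{(1+i)}(2) = 2$, squaring raises the filtration index by exactly $2$ on $U^{(\ge 3)}$; it follows that each of $1-4i$ and $-1-6i$ has order exactly $4$ in $G$, and that the nontrivial powers of $-1-6i$ occupy filtration levels $\{3,5\}$ while those of $1-4i$ occupy levels $\{4,6\}$, so $\langle 1-4i\rangle \cap \langle -1-6i\rangle = \{1\}$. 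Since $4\cdot 4 = 16 = |G|$, the homomorphism $(\Z/4\Z)^2 \to G$ sending $(m,n) \mapsto (1-4i)^m(-1-6i)^n$ is a bijection, hence an isomorphism. This immediately yields the existence and uniqueness of $m_\alpha, n_\alpha$ and the additivity $m_{\alpha\beta} = m_\alpha + m_\beta$, $n_{\alpha\beta} = n_\alpha + n_\beta$. Reducing the defining congruence modulo $4 = (1+i)^4$ and using $1-4i \equiv 1$, $-1-6i \equiv 3+2i$, and $(3+2i)^2 = 5+12i \equiv 1 \pmod 4$ gives $(1-4i)^{m_\alpha}(-1-6i)^{n_\alpha} \equiv (3+2i)^{\overline{n_\alpha}} \pmod 4$.

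For the residue-symbol formulas, note first that $\Nm(1-4i) = 17$ and $\Nm(-1-6i) = 37$ are rational primes, so $1-4i$ and $-1-6i$ are themselves primary Gaussian primes. The supplementary laws of quadratic reciprocity over $\Z[i]$ — equivalently, the fact that the quadratic extensions $\mathbb{Q}(i,\sqrt{1+i})$ and $\mathbb{Q}(i,\sqrt{i}) = \mathbb{Q}(\zeta_8)$ have conductor dividing $(1+i)^7$ — show that $\alpha \mapsto \LEGGG{1+i}{\alpha}$ and $\alpha \mapsto \LEGGG{i}{\alpha}$ descend to characters on $G$. Both sides of each identity $\LEGGG{1+i}{\p} = (-1)^{m_\p}$ and $\LEGGG{i}{\p} = (-1)^{n_\p}$ are then homomorphisms $G \to \{\pm 1\}$, so it suffices to check them on the two generators; this is a short direct computation, e.g. $\LEGGG{i}{-1-6i} \equiv i^{(37-1)/2} = i^{18} = -1$ and $\LEGGG{i}{1-4i} \equiv i^{(17-1)/2} = i^8 = 1$, matching $(-1)^{n_{-1-6i}} = -1$ and $(-1)^{n_{1-4i}} = 1$, and similarly for $\LEGGG{1+i}{\cdot}$.

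The step requiring input beyond elementary $(1+i)$-adic arithmetic — and hence the main obstacle — is verifying that $\LEGGG{1+i}{\cdot}$ and $\LEGGG{i}{\cdot}$ genuinely factor through $G$, i.e.\ depend only on the argument modulo $(1+i)^7$; once this is granted, everything else is bookkeeping with the unit filtration. (Since Lemma~\ref{lem-m-n} is quoted verbatim from \cite{kling2024computing}, one may of course alternatively just appeal to that reference.)
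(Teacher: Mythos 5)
Your proof is correct, and it is genuinely more than what the paper does: the paper simply quotes this lemma from \cite{kling2024computing} without proof, whereas you reconstruct it from scratch. Your route — identifying the group $G$ of primary residues modulo $(1+i)^7$ as $(\Z/4\Z)^2$ via the $(1+i)$-adic logarithm on the unit filtration of $\Z_2[i]$, and then separating $\langle 1-4i\rangle$ and $\langle -1-6i\rangle$ by the filtration levels $\{4,6\}$ versus $\{3,5\}$ of their nontrivial powers — is clean and all the numerology checks out ($v(1-4i-1)=4$, $v(-1-6i-1)=3$, squaring shifts the level by exactly $v(2)=2$, $|G|=2^4=16=4\cdot 4$). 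The mod-$4$ statement and the additivity of $m,n$ fall out of the isomorphism exactly as you say, and your generator checks $i^{18}\equiv -1\ (\mathrm{mod}\ 37)$, $i^{8}=1$ are right; the analogous computations $(1+i)^8=16\equiv -1 \modd 17)$ and $(1+i)^{18}=(2i)^9\equiv 1 \modd{-1-6i})$ confirm the $\LEGGG{1+i}{\cdot}$ identities on the generators. The one point where you lean on an external input is exactly the one you flag: that $\LEGGG{1+i}{\cdot}$ and $\LEGGG{i}{\cdot}$, as functions of a primary argument, factor through residues modulo $(1+i)^7$. This is the supplementary law (for $\LEGGG{i}{\p}$ with $\p=a+bi$ primary one can even compute directly $\LEGGG{i}{\p}=i^{(a^2+b^2-1)/2}=(-1)^{b/2}$, which visibly depends only on $\p$ mod $4$), and citing it from \cite{lemmermeyer2013reciprocity} or from the source \cite{kling2024computing} is entirely appropriate; with that granted, your homomorphism-on-generators argument closes the proof. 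In short: the paper's ``proof'' is a citation, and yours is a valid self-contained replacement whose only non-elementary ingredient is the standard conductor/supplementary-law fact you explicitly isolate.
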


\begin{thm}[{\cite[Theorem 5.6, Remark 5.7]{kling2024computing}}]\label{thm-selmer-algorithm} Suppose $\alpha \in \Z[i]$ is either of the form 
\bal 
\alpha = \pm \p_1 \cdots \p_N \quad \text{or}\quad \alpha = - \p_1^2 \cdots \p_N^2,
\nal 
where the $\p_j$ are distinct primary Gaussian primes. Define the matrix $L\in \text{Mat}_{N}(\F_2)$ by
\bal 
L_{i,j} = \begin{cases}
        \log_{-1} \LEGGG{\p_i}{\p_j} &\text{if }i \neq j,\\[1ex]
        \sum\limits_{k\neq i}\log_{-1} \LEGGG{\p_i}{\p_k} &\text{if }i=j,
    \end{cases}
\nal 
where $\log_{-1}(-1) := 1$ and $\log_{-1}(1) := 0.$ For each square-free divisor $d \in \Z[i]$ of $\alpha$, define $\vecd \in \F_2^N$ by $(\vecd)_j= 1$ if and only if  $\p_j \mid d$. Then $
\text{S}^{(\varphi)}(E_\alpha / \Q(i)) \subset \text{S}'$, where
\bal 
S' := \left\{\text{square-free }d \mid \alpha: \big(\vecd \in \text{ker}(L) \text{ and }d = d^+\big) \text{ or }  \big( L(\vecd) = (\overline{n_{\p_j}})_j \text{ and }d = i d^+ \big)\right\},
\nal 
and $d^+$ denotes the unique primary associate of $d$.
\end{thm}
\begin{proof} In the notation of \cite[Theorem 5.6]{kling2024computing}, taking $b = \alpha$, the hypothesis on $\alpha$ ensures that $L = L_b' = L_b^{(1)}$. Since $t_b=0$ (i.e., $(1+i)\nmid \alpha$), it follows from the LSC at $1+i$ condition of \cite[Theorem~5.6]{kling2024computing} that $t_d=0$ as well. The $d \in S'$ are precisely the $d$ which satisfy LSC away from $1+i$ with $t_d = 0$, since $\vec{y}^{\hspace{.05cm}(0,0)} = \vec{0}$ and $\vec{y}^{\hspace{.05cm}(1,0)} = (\overline{n_{\p_j}})_j$.
\end{proof}
We will apply Theorem \ref{thm-selmer-algorithm} to bound the rank of $E$ in the proof of Theorem \ref{thm-rank2-congruent-number-curve}. The main step is the computation of the matrix $L$, which utilizes the following lemma. The particular form of the primes $\mathfrak{p}_j$ allows us to determine the quadratic residue symbols between them.
\begin{lem}\label{lem-constellation-residues} Suppose $\beta \in \Z[i]$ and $k \in \Z$ such that $\p_j := \beta + i^j k (1+i)$ is a Gaussian prime satisfying $\p_j \equiv -1 -6i \modd 16)$ for each $j \in \{1,2,3,4\}$. Then 
\bal 
\LEGGG{\p_1}{\p_4} = \LEGGG{\p_2}{\p_3} = \LEGGG{\p_2}{\p_4} \neq \LEGGG{\p_1}{\p_2} = \LEGGG{\p_1}{\p_3} = \LEGGG{\p_3}{\p_4}.
\nal 
\end{lem}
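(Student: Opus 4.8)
The plan is to reduce each of the six symbols $\LEGGG{\p_a}{\p_b}$ to $\pm\LEGGG{k}{\p_b}$ and then to show that $\LEGGG{k}{\p_j}$ does not depend on $j$. The first step is to unpack the hypothesis $\p_j\equiv -1-6i\pmod{16}$. Since $16=(1+i)^8$, each $\p_j$ is primary and congruent to $-1-6i$ modulo $(1+i)^7$; as $-1-6i=(1-4i)^0(-1-6i)^1$, Lemma~\ref{lem-m-n} gives $m_{\p_j}=0$ and $n_{\p_j}=1$, hence $\LEGGG{1+i}{\p_j}=1$ and $\LEGGG{i}{\p_j}=-1$ for every $j$. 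Combining this with the identity $2=-i(1+i)^2$ yields
\[
\LEGGG{1+i}{\p_j}=1,\qquad \LEGGG{i}{\p_j}=-1,\qquad \LEGGG{-1}{\p_j}=\LEGGG{i}{\p_j}^{2}=1,\qquad \LEGGG{2}{\p_j}=\LEGGG{-i}{\p_j}=\LEGGG{i}{\p_j}^{3}=-1.
\]

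Next I would compute the six differences from $\p_j=\beta+i^jk(1+i)$:
\begin{align*}
\p_1-\p_2 &= 2ik, & \p_1-\p_3 &= 2ik(1+i), & \p_1-\p_4 &= -2k,\\
\p_2-\p_3 &= -2k, & \p_2-\p_4 &= -2k(1+i), & \p_3-\p_4 &= -2ik.
\end{align*}
Each $\p_j$ is odd (being primary) and does not divide $k$: if $\p_j\mid k$ then $\p_j\mid\beta$, so $\p_j$ would be an associate of every $\p_{j'}$, which is impossible since all four are $\equiv -1-6i\pmod{16}$ (and $k\neq 0$, for otherwise the $\p_j$ all coincide). Hence every $\LEGGG{k}{\p_j}$ is defined. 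Using $\p_a\equiv \p_a-\p_b\pmod{\p_b}$, multiplicativity of the residue symbol, and the values just listed, all six symbols collapse:
\begin{align*}
\LEGGG{\p_1}{\p_2} &= \LEGGG{k}{\p_2}, & \LEGGG{\p_1}{\p_3} &= \LEGGG{k}{\p_3}, & \LEGGG{\p_3}{\p_4} &= \LEGGG{k}{\p_4},\\
\LEGGG{\p_1}{\p_4} &= -\LEGGG{k}{\p_4}, & \LEGGG{\p_2}{\p_3} &= -\LEGGG{k}{\p_3}, & \LEGGG{\p_2}{\p_4} &= -\LEGGG{k}{\p_4}.
\end{align*}

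It then remains to prove that $\LEGGG{k}{\p_1}=\LEGGG{k}{\p_2}=\LEGGG{k}{\p_3}=\LEGGG{k}{\p_4}$. I would write $k=\epsilon\,2^{e}m$ with $\epsilon=\pm1$, $e\ge 0$, and $m$ a positive odd integer, so that $\LEGGG{k}{\p_j}=(-1)^{e}\LEGGG{m}{\p_j}$ (using $\LEGGG{-1}{\p_j}=1$ and $\LEGGG{2}{\p_j}=-1$), the sign $(-1)^e$ being the same for all $j$. For $\LEGGG{m}{\p_j}$, replace $m$ by its primary associate ($m$ itself if $m\equiv 1\pmod{4}$, or $-m$ if $m\equiv 3\pmod{4}$), factor it into primary Gaussian primes, and apply the reciprocity law $\LEGGG{\p}{\q}=\LEGGG{\q}{\p}$ to each factor; this shows that $\LEGGG{m}{\p_j}$ depends only on the residue $\p_j\bmod m$. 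But the difference table shows every $\p_a-\p_b$ is divisible by $2k$, hence by $m$, so all four $\p_j$ agree modulo $m$. Therefore $\LEGGG{m}{\p_j}$, and with it $\LEGGG{k}{\p_j}$, is independent of $j$.

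Writing $\kappa:=\LEGGG{k}{\p_1}=\cdots=\LEGGG{k}{\p_4}\in\{\pm1\}$, the collapsed identities read
\[
\LEGGG{\p_1}{\p_4}=\LEGGG{\p_2}{\p_3}=\LEGGG{\p_2}{\p_4}=-\kappa,\qquad \LEGGG{\p_1}{\p_2}=\LEGGG{\p_1}{\p_3}=\LEGGG{\p_3}{\p_4}=\kappa,
\]
and $-\kappa\neq\kappa$, which is exactly the asserted chain of equalities and the inequality between the two groups. The hard part is the last step — the independence of $\LEGGG{k}{\p_j}$ from $j$ — because $k$ is a rational integer that is neither prime nor primary, so one must carefully strip off its sign and its power of $2$, reduce its odd part to a quantity to which Gaussian quadratic reciprocity applies, and then verify that this quantity sees $\p_j$ only through its residue modulo $2k$. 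What makes everything uniform in $j$ is precisely the congruence $\p_j\equiv -1-6i\pmod{16}$, which forces $\LEGGG{i}{\p_j}$, $\LEGGG{2}{\p_j}$, and $\LEGGG{1+i}{\p_j}$ to take the same values for each $j$; the remaining manipulations are routine bookkeeping with the multiplicativity of the residue symbol.
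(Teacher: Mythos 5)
Your argument is correct, and its skeleton matches the paper's: use $\p_j\equiv -1-6i \modd 16)$ together with Lemma~\ref{lem-m-n} to get $\LEGGG{1+i}{\p_j}=1$ and $\LEGGG{i}{\p_j}=-1$ uniformly in $j$, reduce each $\LEGGG{\p_a}{\p_b}$ to $\pm\LEGGG{k}{\p_b}$ via the difference $\p_a-\p_b$, and then show that $\LEGGG{k}{\p_j}$ does not depend on $j$. You differ on exactly the step you flag as the hard one. You prove the independence of $\LEGGG{k}{\p_j}$ head-on: write $k=\epsilon 2^{e}m$, reduce to the odd part $m$, factor its primary associate into primary Gaussian primes, apply reciprocity to each factor, and note that the four $\p_j$ agree modulo $m$ because every difference $\p_a-\p_b$ is a multiple of $2k$. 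That is valid (and you correctly pin down the nondegeneracy $k\neq 0$ and $\p_j\nmid k$, which the paper leaves implicit in the lemma itself), but it forces you to handle the sign, the power of $2$, and the passage to a primary associate by hand. The paper avoids all of this with a single observation: reciprocity applied to the pair of symbols $\LEGGG{\p_a}{\p_b}$ and $\LEGGG{\p_b}{\p_a}$ --- each already reduced to a prefactor in $\LEGGG{i}{\cdot}$, $\LEGGG{-1}{\cdot}$, $\LEGGG{1+i}{\cdot}$ times $\LEGGG{k}{\p_b}$ resp.\ $\LEGGG{k}{\p_a}$ --- immediately yields $\LEGGG{k}{\p_a}=\LEGGG{k}{\p_b}$, because those prefactors take the same value at every $\p_j$ thanks to the common congruence class. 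No factorization of $k$ is ever needed. Both routes are sound; yours is more elementary but longer, while the paper's is the slicker deployment of the same reciprocity law.
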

\begin{proof} The key observation is that $\p_j \equiv -1 -6i \mod (1+i)^7$ implies $m_{\p_j} = 0$ and  $n_{\p_j} = 1$ in the notation of Lemma \ref{lem-m-n}. It then follows from Lemma \ref{lem-m-n} that
\bal 
\LEGGG{1+i}{\p_j} = 1 \quad \text{and}\quad \LEGGG{i}{\p_j} = -1.
\nal 
Then, since $\p_1 - \p_4 = -2k = i(1+i)^2 k$, we have
\bal 
\LEGGG{\p_1}{\p_4} &= \LEGGG{i}{\p_4}\LEGGG{1+i}{\p_4}^2 \LEGGG{k}{\p_4} = -\LEGGG{k}{\p_4},\\
\LEGGG{\p_4}{\p_1} &= \LEGGG{i}{\p_1}^3 \LEGGG{1+i}{\p_1}^2 \LEGGG{k}{\p_1}= -\LEGGG{k}{\p_1}.
\nal 
Thus, $\LEGGG{k}{\p_1} = \LEGGG{k}{\p_4}$ since $\LEGGG{\p_1}{\p_4} = \LEGGG{\p_4}{\p_1}$ by quartic reciprocity. Repeating the same argument for all pairs shows that $\LEGGG{k}{\p_j}$ is independent of $j$. By considering the various differences $\p_i -\p_j$, we obtain
\bal 
\LEGGG{\p_1}{\p_4} &= \LEGGG{\p_2}{\p_4} = \LEGGG{\p_2}{\p_3} = - \LEGGG{k}{\p_j} \\
\LEGGG{\p_1}{\p_2} &= \LEGGG{\p_1}{\p_3} = \LEGGG{\p_3}{\p_4} =\LEGGG{k}{\p_j}.
\nal 
\end{proof}
\begin{proof}[Proof of Theorem \ref{thm-rank2-congruent-number-curve}] First note that $E$ is \textit{not} genuinely defined over $\Q(i)$ if and only if 
\begin{align}\label{alpha-in-Z}
-(\beta^4 + 4 k ^4)^2 = - \p_1^2 \p_2^2 \p_3^2 \p_4^2 \in \Z.
\end{align}
Since each $\p_j$ is a split prime (as $\p_j \equiv 3+2i \pmod 4$ by Lemma \ref{lem-m-n}), we have $\p_1^2 \p_2^2 \p_3^2 \p_4^2 \in \Z$ if and only if the four primes $\p_1, \p_2, \p_3, \p_4$ form conjugate pairs. In particular, (\ref{alpha-in-Z}) implies $\overline{\p_1} \in \{\p_2, \p_3, \p_4\}$. The cases $\overline{\p_1} \in \{\p_3, \p_4\}$ are impossible, as this would force $k=0$, which implies $\p_1 = \p_2 = \p_3 = \p_4$. Nor can we have $\overline{\p_1} = \p_2$, as this would imply $\text{Im}(\beta) = 0$, contradicting $\text{Im}(\beta) \equiv 2 \pmod 8$, which follows from $\p_1 + \p_3 = 2 \beta \equiv -2 - 12 i \modd 16)$. Therefore, (\ref{alpha-in-Z}) does not hold, and so $E$ is genuinely defined over $\Q(i)$.

Next we show that the rank of $E$ is at least 2. Note that 
\bal 
 (x_0,y_0):=(4 \beta^2 k^2 , 2 i \beta k (\beta^4 - 4 k^4)) \in E(\Q(i)).
\nal 
We claim that $y_0 \neq 0$. Indeed, $\beta \neq 0$ since otherwise the $\p_j$ would not be primary; $k \neq 0$ since otherwise the $\p_j$ would not be distinct; and $\beta^4 \neq 4k^4$ since $4$ is not a fourth power in $\Z[i]$. Thus, $(x_0, y_0)$ is non-torsion by Corollary~\ref{cor-torsion-y-0}. Since the rank of $E$ is even, it follows that the rank is at least 2.

To show that the rank is at most 2, we apply Theorem \ref{thm-selmer-algorithm} with $\alpha =- \p_1^2 \ \p_2^2 \ \p_3^2 \  \p_4^2.$  By Lemma \ref{lem-constellation-residues}, the matrix $L$ must be either
\bal 
L = \begin{pmatrix}
    1& 0& 0& 1\\ 0& 0& 1& 1\\ 0& 1& 1& 0\\ 1& 1& 0& 0
\end{pmatrix} \qquad \quad \text{or}\quad \qquad L = \begin{pmatrix}
    0& 1& 1& 0\\ 1& 1& 0& 0\\ 1& 0& 0& 1\\ 0& 0& 1& 1
\end{pmatrix},
\nal 
depending on whether $\LEGGG{k}{\p_j}$ is $1$ or $-1$. In either case, we have 
\bal 
d = d^+ \text{ and }\vecd \in \text{ker}(L) &\iff d \in \{1,\  \p_1 \p_2 \p_3 \p_4 \},\\
d = i d^+ \text{ and } L \cdot \vecd = (\overline{n_{\p_j}})_j = \vec{1} &\iff d \in \{i \p_1 \p_3, \ i \p_2 \p_4\},
\nal 
where $(\overline{n_{\p_j}})_j = \vec{1}$ since $n_{\p_j} = 1$. Therefore, Theorem \ref{thm-selmer-algorithm} yields  
\bal 
\text{S}^{(\varphi)}(E / \Q(i)) \subseteq \{1, \ \p_1 \p_2 \p_3 \p_4, \ i \p_1 \p_3, \ i \p_2 \p_4\} \cong (\Z/2\Z)^2,
\nal 
and so the rank is at most 2 by Lemma \ref{lem-rank-bound}. The torsion is given by Theorem \ref{thm-torsion}.
\end{proof}
\section{\texorpdfstring{Infinitely many rank 2 congruent number curves over $\Q(i)$}{Infinitely many rank 2 congruent number curves over Q(i)}} We now recall Tao’s constellation theorem in the Gaussian primes. 
\begin{thm}\cite[Theorem 1.2]{tao2006gaussian}\label{thm-tao-constellation} Let $\gamma_1, \dots, \gamma_n \in \Z[i]$ be distinct and let $\mathcal{P}$ be a subset of the Gaussian primes of \textit{positive
upper relative Banach density}; i.e., 
\bal 
\limsup_{N\goto\infty} \frac{\# \{x + y i  \in \mathcal{P}: \;  |x|, |y| \leq N\} }{\# \{x + y i \text{ Gaussian prime}: \; |x|, |y| \leq N\}} >0.
\nal 
Then there exist infinitely many pairs $(\beta, k) \in \Z[i]\times \Z$ such that
$$\beta + k \gamma_1, \;\dots,\; \beta + k \gamma_n \in \mathcal{P}.$$
\end{thm}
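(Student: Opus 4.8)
The plan is to prove this as a Gaussian analogue of the Green--Tao theorem, following the transference (relative Szemer\'edi) strategy: the statement asserts precisely that the Gaussian primes contain translated, dilated copies of the fixed shape $\{\gamma_1,\dots,\gamma_n\}$, with dilation parameter $k\in\Z$. Since arithmetic progressions in $\Z$ correspond to the shape $\gamma_j = j$, this is the natural two-dimensional generalization, and I expect the same three-part architecture — a generalized von Neumann theorem, a pseudorandom enveloping sieve, and a relative Szemer\'edi theorem — to carry over to $\Z[i]$.

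First I would reduce the infinitude claim to a single-box counting statement. Working in boxes $B_N = \{x+yi : |x|,|y|\le N\}$, the positivity of the upper relative Banach density means that for infinitely many $N$ the set $\mathcal{P}\cap B_N$ has size at least $\delta\,\pi(B_N)$ for some fixed $\delta>0$, where $\pi(B_N)$ counts Gaussian primes in $B_N$. It then suffices to show that for every such large $N$ there are $\gg_\delta N^2$ nondegenerate configurations $(\beta,k)$ with $k\neq 0$ and $\beta + k\gamma_j \in \mathcal{P}$ for all $j$; letting $N\goto\infty$ along the density subsequence then produces infinitely many distinct pairs. To run the additive-combinatorial machinery I would pass to a finite model $\Z[i]/\tilde N\Z[i]$ for a suitable integer $\tilde N \asymp N$, chosen so that the forms $\beta+k\gamma_j$ do not wrap around, and apply the \emph{$W$-trick}: restricting to primes in a fixed admissible residue class modulo $W=\prod_{\Nm(\p)\le w}\p$ to strip off the local biases of the Gaussian primes.

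The analytic core is the construction and verification of a pseudorandom majorant. I would build a measure $\nu$ on the finite model from a Goldston--Yıldırım / Selberg truncated divisor sum supported essentially on the $W$-tricked Gaussian primes, normalized so that $\nu$ pointwise dominates a constant multiple of the relative indicator $\mathbf{1}_{\mathcal{P}}$. The crucial — and hardest — step is to establish the \emph{linear forms condition} and the \emph{correlation condition} for $\nu$: all relevant correlation sums of $\nu$ along the forms $\beta+k\gamma_j$ must factor into their expected main terms with negligible error. This is exactly where the argument becomes genuinely number-theoretic rather than combinatorial, requiring the Goldston--Yıldırım sieve estimates to be executed over $\Z[i]$: one must compute the Gaussian singular series, control the local factors at each Gaussian prime uniformly, and manage both the two-dimensional lattice geometry and the degenerate direction arising because $k$ ranges over $\Z$ rather than $\Z[i]$. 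I expect this verification to be the main obstacle, since the uniform control of local densities and the lattice-point counting are substantially more delicate than in the one-dimensional Green--Tao setting.

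Finally, granting the pseudorandomness of $\nu$, I would invoke the transference principle. The generalized von Neumann theorem shows that the count of configurations weighted by $\mathbf{1}_{\mathcal{P}}$ differs from its expected value only by an error controlled by a Gowers uniformity norm, and the relative Szemer\'edi theorem — valid for any function dominated by a pseudorandom $\nu$ — then guarantees that a subset of relative density at least $\delta$ inside the enveloping sieve carries at least $c(\delta)\,N^2$ nondegenerate constellations. Combined with the reduction above, this yields infinitely many pairs $(\beta,k)\in\Z[i]\times\Z$ with $\beta+k\gamma_1,\dots,\beta+k\gamma_n\in\mathcal{P}$, completing the proof.
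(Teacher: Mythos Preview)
The paper does not prove this statement at all: Theorem~\ref{thm-tao-constellation} is imported wholesale as an external result, with the remark immediately following it that the positive-density version follows either from Tao's own Section~12 discussion or as a special case of Kai's later generalization. There is no argument in the paper to compare your proposal against.

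What you have written is a reasonable high-level blueprint of Tao's actual proof --- the transference architecture, the $W$-trick, the Goldston--Y{\i}ld{\i}r{\i}m enveloping sieve over $\Z[i]$, verification of the linear forms and correlation conditions, and the relative Szemer\'edi endgame --- so you have correctly identified what lies behind the citation. But this is a sketch of a long and highly technical paper, not a proof: each of the steps you list (especially the pseudorandomness verification over $\Z[i]$ with a rational dilation parameter) is itself a substantial piece of work, and nothing here could be filled in without essentially reproducing Tao's paper. For the purposes of \emph{this} paper, the correct ``proof'' is the one-line citation the author gives; attempting to reconstruct the Green--Tao machinery is well outside scope.
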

Note that we have stated a stronger theorem than \cite[Theorem 1.2]{tao2006gaussian}, which is formulated for the case where \(\mathcal{P}\) is the set of Gaussian primes. However, Tao's proof of \cite[Theorem 1.2]{tao2006gaussian} establishes the stronger result that we have stated, as noted in \cite[Section 12]{tao2006gaussian}. Alternatively, Theorem \ref{thm-tao-constellation} is a special case of \cite[Theorem 1.4]{kai2020constellations}.

For each pair $(\beta, k)$ satisfying the hypotheses of Theorem \ref{thm-rank2-congruent-number-curve}, we have shown that the elliptic curve $y^2 = x^3 - (\beta^4 + 4 k^4)^2 x$ over $\Q(i)$ has rank 2. To prove Theorem \ref{thm-infinitely-rank2}, it remains to show that infinitely many such pairs $(\beta, k)$ exist.

\begin{proof}[proof of Theorem \ref{thm-infinitely-rank2}] Let $\mathcal{P}$ denote the set of Gaussian primes congruent to $-1-6i$ modulo 16. Also let $\mathfrak{m}$ denote the modulus $(1+i)^8$ and $C_\mathfrak{m}$ denote the associated ray class group. By \cite[Theorem V.1.7]{milne2011class}, we have a group isomorphism
\bal 
C_\mathfrak{m} \cong (\Z[i]/(1+i)^8)^\times \big/ \{\pm 1,\pm i\}.
\nal 
Thus, each class in $C_\mathfrak{m}$ is uniquely represented by a primary element of $ (\Z[i]/(1+i)^8)^\times$.

By the generalized Dirichlet density theorem \cite[Theorem 2.5]{milne2011class}, the prime ideals whose image in $C_\mathfrak{m}$ correspond to $-1-6i \in (\Z[i]/(1+i)^8)^\times$ 
have \emph{Dirichlet} density 1/32. Equivalently, the set of Gaussian primes 
\bal 
\mathcal{P}' := \big\{ \gamma \in \Z[i]: \gamma \equiv i^k (-1-6i) \modd 16) \text{ for some }k \in \{0,1,2,3\} \big\}
\nal 
has Dirichlet density 1/32. By the Chebotarev density theorem (see, e.g., \cite{kisilevsky2011chebotarev}), $\mathcal{P}'$ also has \emph{natural} density 1/32. As each Gaussian prime in $\mathcal{P}'$ has exactly four associates in $\mathcal{P}'$, and $\mathcal{P}$ consists of the primary associates in $\mathcal{P}'$, it follows that $\mathcal{P}$ has natural density 1/128. Finally, by \cite[Proposition 7.7]{kai2020constellations}, taking the fundamental domain to be the set of primary Gaussian primes, we conclude that $\mathcal{P}$ has positive upper relative Banach density.

To complete the proof, apply Theorem \ref{thm-tao-constellation} with $\mathcal{P}$ and $\gamma_j = i^j (1+i)$ for $j \in \{1,2,3,4\}$.
\end{proof}

\bibliographystyle{amsalpha}
\bibliography{main}
\end{document}